\newcommand{\interior}[1]{%
  {\kern0pt#1}^{\mathrm{o}}%
}
\newcommand{\C}{\mathbb{C}}
\newcommand{\R}{\mathbb{R}}
\DeclareMathOperator{\THH}{THH}
\DeclareMathOperator{\THR}{THR}
\DeclareMathOperator{\Gal}{Gal}
\newcommand{\id}{\mathrm{id}}
\DeclareMathOperator*{\hocolim}{hocolim}
\DeclareMathOperator*{\colim}{colim}
\DeclareMathOperator{\Map}{Map}
\DeclareMathOperator{\Ob}{Ob}
\newcommand{\ob}{\Ob}
\DeclareMathOperator{\conn}{conn}
\newcommand{\Top}{\mathsf{Top}}
\newcommand{\incl}{\mathrm{incl}}
\newcommand{\op}{\mathrm{op}}
\DeclareMathOperator{\ind}{ind}
\newcommand{\sd}{\mathrm{sd}}
\newcommand{\Z}{\mathbb{Z}}
\theoremstyle{plain}
\newtheorem{theorem}{Theorem}[section]
\newtheorem{lemma}[theorem]{Lemma}
\newtheorem{prop}[theorem]{Proposition}
\theoremstyle{definition}
\newtheorem{definition}[theorem]{Definition}
\newtheorem{example}[theorem]{Example}
\newtheorem{remark}[theorem]{Remark}
\newtheorem{ass}[theorem]{Assumptions}
\theoremstyle{plain}   
\newtheorem{bigthm}{Theorem}   
\author{Amalie Høgenhaven}
\date{\today}
\thanks{Assistance from DNRF Niels Bohr Professorship of Lars Hesselholt is
gratefully acknowledged}
\newlength{\TMP}
\begin{document}

\title[On the geometric fixed points]{On the geometric fixed points of real topological Hochschild homology}

\maketitle

\begin{abstract}
We compute the component group of the derived $G$-geometric fixed points of the real topological Hochschild homology of a ring with anti-involution, where $G$ denotes the group $\Gal(\mathbb{C}/\mathbb{R})$ of order 2. 

\end{abstract}

\section*{Introduction}
Recently, Hesselholt and Madsen defined real topological Hochschild homology in~\cite{HM15} using a dihedral variant of Bökstedt's model in \cite{Bo}. The real topological Hochschild homology functor takes a ring $R$ with an anti-involution $\alpha$, that is a ring isomorphism $\alpha: R^{\op} \to R$ such that $\alpha^2=\id$, and associates an $O(2)$-equivariant orthogonal spectrum $\THR(R, \alpha)$. 

Real topological Hochschild homology was introduced to fit in the framework of real algebraic $K$-theory, which was also defined by Hesselholt and Madsen in \cite{HM15}. Real algebraic $K$-theory associates a $G$-spectrum $KR(R,\alpha)$ to a ring $R$ with an anti-involution $\alpha$, where $G$ is the Galois group $\Gal(\mathbb{C} / \mathbb{R})$. The underlying non-equivariant spectrum is weakly equivalent to the usual $K$-theory spectrum of $R$, and the $G$-fixed point spectrum is weakly equivalent to the Hermitian $K$-theory spectrum of $(R,\alpha)$, as defined by Karoubi in \cite{Kar}, when $2$ is invertible in the ring. Furthermore, there is a $G$-equivariant trace map
\[
\operatorname{tr}: KR(R,\alpha) \to \THR(R,\alpha).
\]

The classical trace maps are often highly non-trivial and several calculations in algebraic $K$-theory have been carried out using the trace, or more precisely the refinement of the trace to topological cyclic homology, see \cite{BHM}. Classical calculations using trace methods often rely on a good understanding of $\pi_*\THH(R)^{C_{r}}$. In order to make the equivariant trace an efficient computational tool, we must understand the dihedral fixed points $\pi_*\THR(R, \alpha)^{D_{r}}$ and, in particular, the components $ \pi_0\THR(R, \alpha)^{D_{r}}$. As a first step in this direction, we calculate the group of components of the derived $G$-geometric fixed points. 

The orthogonal spectrum $\THR(R,\alpha)$ is cyclotomic, which means that its derived $C_r$-geometric fixed points mimic the behavior of the $C_r$-fixed points of a free loop space $\mathcal{L} X$ of a $G$-space $X$, see \cite[Prop. 1.5]{HM97} and \cite[Sect. 3.3]{Hoe} for details. In particular, it implies that the spectrum of $C_r$-geometric fixed points of $\THR(R, \alpha)$ resembles $\THR(R, \alpha)$ itself. The derived $G$-geometric fixed point, however, behave differently. In the analogy with the free loop space, the derived $G$-geometric fixed points of $\THR(R,\alpha)$ corresponds to the $G$-fixed points of $\mathcal{L} X$, and we briefly investigate how the latter behaves. 

We let $\mathcal{L} X=\Map(\mathbb{T}, X)$ be the free loop space of a $G$-space $X$. The group $O(2)$ acts on $\mathbb{T}$ by multiplication and complex conjugation and we view $X$ as an $O(2)$-space with trivial $\mathbb{T}$-action. The free loop space becomes an $O(2)$-space by the conjugation action. Let $\mathbb{T} \subset \mathbb{C}$ denote the circle group. The group $G=\Gal(\mathbb{C}/\mathbb{R})$ acts on $\mathbb{T}$ and we let $O(2)$ denote the semi-direct product $O(2)= \mathbb{T} \rtimes G$. If $r$ is a natural number, then we let 
\[\rho_r: O(2) \to O(2)/C_r
\] 
denote the root isomorphism given by $\rho_r(z)=z^{\frac{1}{r}}C_r$ if $z\in \mathbb{T}$ and $\rho_r(x)=x$ if $x\in G$. The map that takes a loop to the $r$-fold concatenation with itself,
\begin{align*}
p_r :  \mathcal{L} X \to \rho_r^*(\mathcal{L} X)^{C_r}, \quad p_r( \gamma ) = \gamma \star \cdots \star \gamma, 
\end{align*}
is an $O(2)$-equivariant homeomorphism, but the $G$-fixed space of the free loop space looks very different from the loop space itself.  Indeed, if $\omega \in G$ is complex conjugation then we have a homeomorphism
\[
\Map((I, \partial I), (X, X^G)) \to (\mathcal{L} X)^G,  \quad \gamma \mapsto  (\omega \cdot \gamma) \star \gamma. 
\]

The content of this paper is organized as follows. In Section 1 and 2 we review the construction of the orthogonal $O(2)$-spectrum $\THR(R,\alpha)$, and observe that, if $R$ is a commutative ring, then $\THR(R,\alpha)$ has the homotopy type of a commutative $O(2)$-ring spectrum. 

In section 3 we prove the main theorem of this paper. In order to state the theorem, we let $R$ be a ring with an anti-involution $\alpha$, which is a ring isomorphism $\alpha: R^{\op} \to R$ such that $\alpha^2=\id$, and we let $N: R \to R^{\alpha}$ denote the norm map 
\[N(r)=r+\alpha(r).\] 

\begin{bigthm}\label{A}
Let $R$ be a ring with an anti-involution $\alpha$. There is an isomorphism of abelian groups
\[
\pi_0 \big( (\THR(R, \alpha)^c)^{gG} \big) \cong (R^{\alpha}/N(R)\otimes_{\Z} R^{\alpha}/N(R))/I,
\]
where $I$ denotes the subgroup generated by the elements $\alpha(s)rs \otimes t-r \otimes st\alpha(s)$ for all $s \in R$ and $r,t \in R^{\alpha}$. 
\end{bigthm}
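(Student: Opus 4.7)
The plan is to realize the derived geometric fixed point spectrum $(\THR(R,\alpha)^c)^{gG}$ as the geometric realization of an explicit simplicial spectrum and then to extract $\pi_0$ as a coequalizer of abelian groups. Since $\THR(R,\alpha)$ is built in Sections~1 and~2 as the dihedral realization of a Bökstedt-type object, I would first pass to the $G$-edgewise subdivision, which converts the $G$-action on the realization into a levelwise involution. After a sufficiently cofibrant replacement and levelwise passage to $G$-fixed points modulo the image of the norm from the trivial subgroup, one obtains a simplicial spectrum whose realization is $(\THR(R,\alpha)^c)^{gG}$.

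For $\pi_0$ of this realization, one only needs to analyze the $0$- and $1$-simplices and take the coequalizer of the two face maps. Geometrically, the picture --- guided by the free-loop-space analogy and the formula $\gamma \mapsto (\omega \cdot \gamma) \star \gamma$ from the introduction --- is that a $G$-fixed ``dihedral loop'' visits the two $G$-fixed points $\pm 1 \in \mathbb{T}$ of the dihedral polygon. Each of these two points contributes an independent copy of the $G$-geometric fixed points of $R$ at the level of $\pi_0$, giving rise to the two tensor factors in the ambient abelian group $R^\alpha/N(R) \otimes_\Z R^\alpha/N(R)$.

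A $1$-simplex of the subdivided object carries an extra $R$-factor $s$ on one of the arcs between the two $G$-fixed points. The two face maps collapse this factor onto one of the two endpoints, but because the dihedral involution twists the ``far'' arc by $\alpha$, one face map yields $\alpha(s)rs \otimes t$ and the other yields $r \otimes st\alpha(s)$. Taking the coequalizer then imposes exactly the relation generating $I$, producing the presentation claimed in Theorem~\ref{A}. The main obstacle I expect is twofold: first, establishing rigorously that the derived $G$-geometric fixed point functor commutes with the dihedral Bökstedt realization in a sufficiently cofibrant $O(2)$-spectrum model, which requires the equivariant Bökstedt approximation apparatus; and second, the careful tracking of the twist by $\alpha$ across the dihedral face maps needed to obtain the precise form of the generators of $I$, in particular distinguishing $\alpha(s)rs$ and $st\alpha(s)$ from symmetric variants such as $sts$.
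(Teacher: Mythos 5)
Your plan matches the paper's proof in all essentials: the paper likewise passes to Segal's real (edgewise) subdivision, computes $\pi_0$ as the coequalizer of the two subdivided face maps via the Equivariant Approximation Lemma, identifies each of the two $G$-fixed arcs as contributing a copy of $R^\alpha/N(R)$ (by computing $\pi_1(HR_2^{\alpha\circ\omega})$ from the associated chain complex, where the differential is precisely the norm), and recovers the relations defining $I$ from the $\alpha$-twist in the far-arc face map. The one small implementation difference is that the paper models the derived $G$-geometric fixed points by smashing with $S^{1,1}\simeq \widetilde{E}G$ on $\pi_0$ and then taking genuine $G$-fixed points of the resulting fibrant spectrum, rather than your ``levelwise fixed points modulo norm''; in the paper the quotient by $N(R)$ appears only at the end, inside the Moore complex of $HR_2^{\alpha\circ\omega}$, not as a levelwise quotient of spaces.
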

The identification of the component group can be rewritten as
\[
\pi_0 \big( ( \THR(R, \alpha)^c)^{gG} \big) \cong R^{\alpha}/N(R)\otimes_R R^{\alpha}/N(R).
\]
where we view the group $R^{\alpha}/N(R)$ as a right resp. left $R$-module via the actions 
\[
 x \cdot r= \alpha(r)xr \quad \text{and} \quad r\cdot x= rx \alpha(r).
\] 
We note that the $G$-geometric fixed points vanishes if $2$ is invertible in $R$, since the norm map surjects onto the fixed points of the anti-involution in this case: If $x\in R^{\alpha}$, then $N(\frac{1}{2}x)=x$.

We end this introduction by stating some immediate consequences of \mbox{Theorem \ref{A}}. 

If $R$ is a commutative ring, then the components of the $C_{r}$-fixed points and the components of the $D_{r}$-fixed points have ring structures. The component ring of the $C_{p^n}$-fixed points, $\pi_0\THH(R)^{C_{p^n}}$, is completely understood when $p$ is a prime. Hesselholt and Madsen prove in \cite{HM97} that there is a canonical ring isomorphism identifying $ \pi_0\THH(R)^{C_{p^n}}$ with the $p$-typical Witt vectors of length $n+1$. The classical construction of the Witt vectors can be understood as a special case of a construction which can be defined relative to any given profinite group, as done by Dress and Siebeneicher in \cite{DS}. Furthermore, the $p$-typical Witt vectors of length $n+1$ are exactly the Witt vectors constructed relative to the group $C_{p^n}$. In other words, 
\[  \pi_0\THH(R)^{C_{p^n}} \cong W_{C_{p^n}}(R).
\] 
If $R$ is commutative, then the identity defines an anti-involution on $R$ and it is tempting to guess that the ring $\pi_0\THR(R, \id)^{D_{p^n}} $ can be identified with the Witt vectors $W_{D_{p^n}}(R)$. However, Theorem \ref{A} tells us that this is not the case; see Remark \ref{witt}.

When $R$ is a commutative ring, the components of the derived $G$-geometric fixed points of $\THR(R,\alpha)$ have a ring structure. If $R$ is a commutative ring with the identity serving as anti-involution, then Theorem \ref{A} implies the functor 
\[
R \mapsto  \pi_0 \big(( \THR(R, \id)^c)^{gG} \big).
\]
considered as a functor from the category of commutative rings to the category of sets, is not representable, since the functor does not preserve finite products, see Remark \ref{limit}. This rules out the possibility that $\big(( \THR(R, \id)^c)^{gG} \big)$ is a ring of Witt vectors as defined by Borger in \cite{Borger}.

\begin{remark}
Throughout this paper, $\mathbb{T} \subset \mathbb{C}$ denotes the circle group and $G$ is the group $\Gal(\C/\R)=\{1,\omega\}$. The group $G$ acts on $\mathbb{T} \subset \mathbb{C}$ and $O(2)$ is the semi-direct product $O(2)= \mathbb{T} \rtimes G$. We let $C_r$ denote the cyclic subgroup of order $r$ and let $D_{r}$ denote the dihedral subgroup $C_r \rtimes G$ of order $2r$. 

By a space we will always mean a compactly generated weak Hausdorff space and all constructions are always carried out in this category. 
\end{remark}

\medskip
\noindent\textbf{Acknowledgments.} The author wishes to thank Lars Hesselholt for his guidance and for many valuable discussions. The author would also like to thank Martin Speirs, Dustin Clausen, Irakli Patchkoria and Kristian Moi for many useful conversations concerning the content of this paper.

\section{Real topological Hochschild homology}
A symmetric ring spectrum $X$ is a sequence of based spaces $X_0, X_1, \dots$ with a left based action of the symmetric group $\Sigma_n$ on $X_n$ and $\Sigma_n \times \Sigma_m$-equivariant maps $\lambda_{n,m} : X_n \wedge S^m  \to X_{n+m}$. Let $A$ be a symmetric ring spectrum with multiplication maps $\mu_{n,m}: A_n \wedge A_m \to A_{n+m}$ and unit maps $1_n: S^n \to A_n$. An anti-involution on $A$ is a self-map of the underlying symmetric spectrum $D: A \to A$, such that 
\[D^2=\id, \quad D_n \circ 1_n=1_n,\] 
and the following diagram commutes:
\begin{center}
\begin{tikzpicture}
  \matrix (m) [matrix of math nodes,row sep=2em,column sep=4em,minimum width=2em] {
 A_m \wedge A_n & A_{m} \wedge A_{n} \\
  & A_{n} \wedge A_m\\
   & A_{n+m} \\
    A_{m+n}    & A_{m+n}.\\};
  \path[-stealth]
      (m-1-1) edge node [above] {$D_m \wedge D_n$} (m-1-2)          
    (m-1-2) edge node [right] {$\gamma$} (m-2-2)
    (m-2-2) edge node [right] {$\mu_{n,m} $} (m-3-2)
    (m-3-2) edge node [right] {$\chi_{n,m}$} (m-4-2)
        (m-1-1) edge node [right] {$\mu_{m,n}$} (m-4-1)
    (m-4-1) edge node [above] {$D_{m+n}$} (m-4-2)
  
    ;
\end{tikzpicture}
\end{center}
Here $\gamma$ is the twist map and $\chi_{n,m}\in \Sigma_{n+m}$ is the shuffle permutation
\[\chi_{n,m}(i)= \left\{ \begin{array}{rl}
 i+m &\mbox{ if $1 \leq i \leq n$} \\
  i-n &\mbox{ if $n+1 \leq i \leq n+m.$}
       \end{array} \right. \]

Let $R$ be a unital, associative ring. Then $R$ determines a symmetric ring spectrum $HR$, called the Eilenberg MacLane spectrum of $R$, which can be constructed as follows. Let $S^1[-]:=\Delta^1[-]/\partial \Delta^1[-]$ denote the pointed simplicial circle and let $S^n[-]$ denote the pointed simplicial $n$-sphere defined as the $n$-fold smash product $S^1[-] \wedge \cdots \wedge S^1[-]$. The $n$th space of the spectrum $HR$ is the realization of the reduced $R$-linearization of the simplicial $n$-sphere:
\[
HR_n:= R(S^n)=\vert [k] \mapsto R[S^n[k]]/ R[*]\vert.
\]
Here $R[S^n[k]]$ is the free $R$-module generated by the $k$-simplices $S^n[k]$ and $R[*]$ is the sub-$R$-module generated by the basepoint $* \in S^n[k]$. The symmetric group $\Sigma_n$ acts by permutation of the smash factors of $S^n[-]$ and there are natural multiplication and unit maps
\[
\mu_{m,n}: HR_m \wedge HR_n \to HR_{m+n}, \quad 1_n: S^n \to HR_n,
\]
which are $\Sigma_m \times \Sigma_n$-equivariant and $\Sigma_n$-equivariant.

An anti-involution $\alpha$ on $R$ is a ring isomorphism $\alpha: R^{\text{op}} \to R$ such that $\alpha^2=\id$. If $\alpha$ is an anti-involution on $R$, then we also let $\alpha$ denote the induced anti-involution on the symmetric ring spectrum $HR$, which in spectral level $n$ is the geometric realization of the map of simplicial $R$-modules given by $r\cdot x \mapsto \alpha(r) \cdot x$ for $r\in R$ and $x\in S^n[k]$.

Given a symmetric ring spectrum with anti-involution $(A,D)$, the real topological Hochschild homology space $\THR (A,D)$ was defined in \cite{HM15} as the geometric realization of a dihedral space, and we briefly recall the notion of a dihedral object. The real topological Hochschild homology of a ring with anti-involution $(R,\alpha)$ is the real topological Hochschild homology of $(HR,\alpha)$, which we simply denote $\THR(R,\alpha)$. 

\begin{definition}\label{didef}
A dihedral object in a category $\mathcal{C}$ is a simplicial object 
\[X[-]: \triangle^{\op} \to \mathcal{C}\] 
together with dihedral structure maps $t_k, w_k: X[k] \to X[k]$ such that $t_k^{k+1}=\id$, $w_k^2=\id$, and $t_k w_k=t_k^{-1}\omega_k$. The dihedral structure maps are required to satisfy the following relations involving the simplicial structure maps:
\begin{align*}
d_l w_k=w_{k-1}d_{k-l}, &\quad s_l w_k=w_{k+1}s_{k-l} \ \text{ if }0\leq l\leq k, \\ 
d_l t_k=t_{k-1}d_{l-1}, &\quad s_l t_k=t_{k+1}s_{l-1} \ \ \ \text{ if } 0<l\leq k,  \\
 d_0t_k=d_k, &\quad s_0t_k=t_{k+1}^2s_k.
\end{align*}
\end{definition}
A simplicial object together with structure maps $t_k: X[k] \to X[k]$ satisfying the above relations is called a cyclic object and a simplicial object together with structure maps $w_k: X[k] \to X[k]$ satisfying the above relations is called a real object. The geometric realization of the simplicial space underlying a dihedral (resp. cyclic, resp. real) space carries an action by $O(2)$ (resp. $\mathbb{T}$, resp. $G$): See \cite{FL} for more details.

Let $I$ be the category with objects all non-negative integers. The morphisms from $i$ to $j$ are all injective set maps
\[\{1,\dots,i\}  \to \{1,\dots, j\}. \] 
The category $I$ has a strict monoidal product $+: I\times I \to I$ given on objects by addition and on morphisms by concatenation. We note that the initial object $0\in \Ob(I)$ serves as the identity for the monoidal product. For $i \in \Ob(I)$ we let $\omega_i: i \to i$ denote the morphism that reverses the order of the elements:
\[\omega_i(s)=i-s+1. \]
Given a morphism $\theta: i \to j$ we define the conjugate morphism $\theta^\omega$ by $\theta^{\omega}:=\omega_j \circ \theta\circ \omega_i^{-1}$.
We define a dihedral category $I[-]$ by letting $I[k]=I^{k+1}$ and defining cyclic structure maps $d_i: I[k] \to I[k-1]$, $s_i: I[k] \to I[k+1]$, and $t_k: I[k] \to I[k]$ on objects by
\begin{align*}
d_i(i_0,\cdots, i_k)&=(i_0,\cdots, i_i+i_{i+1},\cdots,i_k), \quad 0\leq i <k,  \\
d_i(i_0,\cdots, i_k)&=(i_k + i_0, \cdots,i_{k-1}), \quad \quad \quad \quad  i=k,  \\
s_i(i_0,\cdots, i_k)&=(i_0,\cdots, i_i, 0, i_{i+1},\cdots,i_k),\quad 0\leq i \leq k,  \\
t_k(i_0,\cdots, i_k)&=(i_k, i_0,\cdots, i_{k-1}), 
\end{align*}
and similarly on morphisms. The structure maps $w_k: I[k] \to I[k]$ is defined on a tuple of objects by
\begin{align*}
w_k(i_0,\dots,i_k)&=(i_0,i_k,i_{k-1},\dots,i_1)
\end{align*}
and on a tuple of morphisms by
\begin{align*}
w_k(\theta_0,\dots,\theta_k)&= (\theta_0^{\omega}, \theta_k^{\omega}\dots,\theta_1^{\omega}).
\end{align*}

Let $X$ be a pointed space with a pointed left $O(2)$-action and let $(A,D)$ be a symmetric ring spectrum with an anti-involution. Let $G(A)_X^k: I^{k+1} \to \Top_*$ denote the functor given on objects by
\[G(A)_X^k(i_0,\dots, i_k)= \Map \Big( S^{i_0} \wedge \cdots \wedge S^{i_k} ,  A_{i_0}\wedge \cdots \wedge A_{i_k} \wedge  X  \Big). \]
We will almost always omit the $A$ and simply write $G_X^{k+1}$ if there is no confusion about which spectrum $A$ is used in the construction of the functor. The functor is defined on morphisms using the structure maps of the spectrum; see \cite{HM97} or \cite[Sect. 4.2.2]{DGM}. We define a dihedral space by setting
\[
\THR(A,D;X)[k]:= \hocolim_{I^{k+1}} G_X^k
\]
with simplicial structure maps as described in \cite{HM97} or \cite[Sect. 4.2.2]{DGM}. Let $\omega_i \in \Sigma_i$ be the permutation given by $\omega_i(s)=i-s+1$. We let 
\[ t_k':G_X^k \Rightarrow  G_X^k \circ t_k, \quad w_k':G_X^k \Rightarrow  G_X^k \circ w_k \]
be the natural transformations which at $(i_0, \dots, i_k) \in \Ob(I^k)$ are defined by the following commutative diagrams
 \[
 \begin{tikzpicture}
\node(a){$   S^{i_0}\wedge\cdots\wedge S^{i_k}$}; 
\node(b)[right of = a,node distance = 6cm]{$A_{i_0}\wedge\cdots\wedge A_{i_k}\wedge X$};
\node(e)[below of = a,node distance = 1.5cm]{$  S^{i_k}\wedge S^{i_0} \wedge \cdots\wedge S^{i_{k-1}}$};
\node(f)[right of = e,node distance = 6cm]{$A_{i_k}\wedge A_{i_0} \wedge \cdots\wedge A_{i_{k-1}}\wedge X$};
\draw[->](a) to node [above]{$f$} (b);
\draw[->](e) to node [left]{$\tau^{-1}$} (a);
\draw[->](b) to node [right]{$\tau \wedge X$} (f);
\draw[->](e) to node [above]{$t_k'(f)$} (f);
\end{tikzpicture}
 \]
and 
 \[
 \begin{tikzpicture}
\node(a){$ S^{i_0}\wedge S^{i_1} \wedge \cdots\wedge S^{i_k}$}; 
\node(b)[right of = a,node distance = 6cm]{$A_{i_0}\wedge A_{i_1}\wedge \cdots\wedge A_{i_k}\wedge X$};
\node(c)[below of = a,node distance = 1.5cm]{$  S^{i_0}\wedge S^{i_k} \wedge \cdots\wedge S^{i_1}$};
\node(d)[right of = c,node distance = 6cm]{$A_{i_0} \wedge A_{i_1} \wedge\cdots\wedge A_{i_k}\wedge X$};
\node(e)[below of = c,node distance = 1.5cm]{$  S^{i_0}\wedge S^{i_k} \wedge \cdots\wedge S^{i_1}$};
\node(f)[right of = e,node distance = 6cm]{$A_{i_0}\wedge A_{i_1}\wedge\cdots\wedge A_{i_k}\wedge X$};
\node(g)[below of = e,node distance = 1.5cm]{$  S^{i_0}\wedge S^{i_k} \wedge \cdots\wedge S^{i_1}$};
\node(h)[right of = g,node distance = 6cm]{$A_{i_0} \wedge A_{i_k} \wedge\cdots\wedge A_{i_1}\wedge X$};

\draw[->](a) to node [above]{$f$} (b);
\draw[->](g) to node [above]{$w_k'(f)$} (h);

\draw[->](g) to node [left]{$\omega_0 \wedge \cdots \wedge \omega_k $} (e);
\draw[->](e) to node [left]{$\id$} (c);
\draw[->](c) to node [left]{$\upsilon$} (a);

\draw[->](b) to node [right]{$\omega_{i_0} \wedge \cdots \wedge \omega_{i_k} \wedge \id $} (d);
\draw[->](d) to node [right]{$D_{i_0}  \wedge \cdots \wedge D_{i_k} \wedge \id $} (f);
\draw[->](f) to node [right]{$\upsilon$} (h);

\end{tikzpicture}
 \]
where $\tau$ cyclically permutes the smash factors to the right, and $\upsilon$ fixes the first smash factor and reverses the order of the rest. The structure maps are given as the compositions of the maps induced by the natural transformations and the canonical maps:
\begin{align*}
t_k :\hocolim_{I^{k+1}} \ G_X^k &\xrightarrow{t_k'} \hocolim_{I^{k+1}}  \ G_X^{k}  \circ t_k \xrightarrow{\ind_{t_k}}  \hocolim_{I^{k+1}}  \ G_X^{k} , \\
w_k :\hocolim_{I^{k+1}}  \ G_X^k &\xrightarrow{w_k'} \hocolim_{I^{k+1}} \ G_X^{k}  \circ w_k \xrightarrow{\ind_{w_k}}  \hocolim_{I^{k+1}} \ G_X^{k}. 
\end{align*}
We have defined a dihedral space and we let $ \THR(A, D;X)$ denote the realization
 \[ 
 \THR(A, D;X):= \Big \vert [k] \mapsto \THR(A, D;X)[k]  \Big \vert.
 \]
The space $\THR(A, D;X)$ is in fact an $O(2) \times O(2)$-space, where the action by the first factor comes from the dihedral structure and the action by the second factor comes from the $O(2)$-action on $X$. We are interested in the space $\THR(A, D;X)$ with the diagonal $O(2)$-action.

\begin{remark}\label{sub}
Let $\triangle: O(2) \to O(2) \times O(2)$ denote the diagonal map. The tool available for investigating the fixed point space $(\triangle^*\THR(A, D; X))^{G}$ is Segal's real subdivision constructed in \cite[Appendix A1]{Seg}. We briefly recall the real subdivision functor $\sd^e$ and refer to Segal's paper for details. 

Let $X[-]$ be a dihedral space. There is a (non-simplicial) homeomorphism
\[D^e: \vert  \sd^eX[-]\vert \to \vert X[-] \vert,\] 
where $\sd^e X[-]$ is the simplicial space with $k$-simplices $\sd^e X[k]=X[2k+1]$ and simplicial structure maps, for $0\leq i \leq k$, given by
\begin{align*}
(d_i)^e &: \sd^e X[k] \to \sd^e X[k-1], \quad (d_i)^e =d_i \circ d_{2k+1-i,} \\
(s_i)^e &: \sd^e X[k] \to \sd^e X[k+1], \  \quad (s_i)^e = s_i \circ s_{2k+1-i}.
\end{align*}
The simplicial set $\sd^e X[-]$ has a simplicial $G$-action which in simplicial level $k$ is generated by $w_{2k+1}$.  Thus the realization inherits a $G$-action. The advantage of real subdivision is that the homeomorphism $D^e$ is $G$-equivariant. In particular, it induces a homeomorphism
\[\vert  \sd^eX[-]^G\vert \to \vert X[-] \vert^G.\] 
\end{remark}
 
Note that
\[
\sd^e \THR(A, D ;X)[k]=\hocolim_{I^{2k+2}} \ G_{X}^{2k+1}=\big \vert [n] \mapsto \bigvee_{\underline{i}_0 \to \cdots \to \underline{i}_n} G_{X}^{2k+1} (\underline{i}_0)\big \vert,
\]
where $\underline{i} \in\Ob(I^{2k+1})$. The $G$-action on $X$ gives rise to a natural transformation 
\[
X_{\omega}: G_{X}^{2k+1} \circ w_{2k+1}  \Rightarrow G_{X}^{2k+1} \circ w_{2k+1}.
\] 
The diagonal $G$-action is generated by the simplicial operator which takes the summand indexed by $\underline{i}_0 \to \cdots \to \underline{i}_n$ to the one indexed by $w_{2k+1}(\underline{i}_0) \to \cdots \to w_{2k+1}(\underline{i}_n)$ via 
\begin{align} \label{t1}
G_{X}^{2k+1} (\underline{i}_0) \xrightarrow{w_{2k+1'}} G_{X}^{2k+1}\circ w_{2k+1}(\underline{i}_0) \xrightarrow{X_\omega} G_{X}^{2k+1}\circ  w_{2k+1} (\underline{i}_0).
\end{align}
In particular if a $k$-simplex is fixed, then it must belong to a summand whose index is fixed under the functor $w_{2k+1}$. Such an index consists of objects of the form
\begin{align}\label{t3}
(i_0,i_1,\dots,i_k,i_{k+1},i_k,\dots, i_1)
\end{align}
and morphisms of the form 
\begin{align} \label{t4}
(\theta_0,\theta_1,\dots,\theta_k,\theta_{k+1},\theta^{\omega}_k,\dots, \theta^{\omega}_1) 
\end{align}
where
$\theta_0= \theta_0^{\omega}$ and $\theta_{k+1}=\theta_{k+1}^{\omega}$. Let $I^{G}$ denote the subcategory of $I$ with the same objects and all morphisms $\theta$ which satisfies $\theta^\omega=\theta$. Let 
\[
\triangle^e : I^{G} \times I^{k} \times  I^{G}  \to I^{2k+2}
\]
denote the ``diagonal'' functor which maps a tuple $(i_0, i_1,\dots, i_k,i_{k+1})$ to the tuple \eqref{t3} and a tuple of morphisms $(\theta_0, \theta_1,\dots, \theta_k,\theta_{k+1})$ to the tuple \eqref{t4}. The natural transformation \eqref{t1} restricts to a natural transformation from $G^{2k+1}_{X}\circ \triangle^e$ to itself, hence $G$ acts on $G^{2k+1}_{X}\circ \triangle^e$ through natural transformations. Since geometric realization commutes with taking fixed points of the finite group $G$ by \cite[Cor. 11.6]{May72}, we obtain the following lemma.

\begin{lemma}\label{fixed points}
The canonical map induces a homeomorphism
\begin{align*}
\hocolim_{I^{G} \times I^k \times I^{G}}  (G^{2k+1}_{X}\circ \triangle^e)^{G} \xrightarrow{\cong} \Big(\hocolim_{I^{2k+2}}  G^{2k+1}_{X} \Big)^{G}.
\end{align*}
\end{lemma}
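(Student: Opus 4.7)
The plan is to compute the $G$-fixed points of the bar-construction model of the homotopy colimit one simplicial degree at a time, and to identify the result with the analogous bar construction for the smaller indexing category. Recall that
\[
\hocolim_{I^{2k+2}} G_X^{2k+1} \;=\; \Big\vert [n] \mapsto \bigvee_{\underline{i}_0 \to \cdots \to \underline{i}_n} G_X^{2k+1}(\underline{i}_0)\Big\vert,
\]
where the wedge is indexed by chains of morphisms in $I^{2k+2}$. By \cite[Cor. 11.6]{May72}, geometric realization commutes with $G$-fixed points for the finite group $G$, so it suffices to identify the $G$-fixed subspace of the wedge in each simplicial degree $n$.

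First I would decompose the $G$-action into its action on the indexing set of chains and its action on individual summands. By \eqref{t1}, $\omega \in G$ permutes the summands according to the action of $w_{2k+1}$ on chains, so a summand contributes non-trivially to the fixed points only if its chain is fixed pointwise by $w_{2k+1}$, i.e., if every object $\underline{i}_j$ has the form \eqref{t3} and every morphism has the form \eqref{t4}. Unwinding the definitions, $\triangle^e$ is an isomorphism of categories from $I^G \times I^k \times I^G$ onto the subcategory of $I^{2k+2}$ fixed by $w_{2k+1}$, so such fixed chains are in natural bijection with chains in $I^G \times I^k \times I^G$. Any non-fixed summand is exchanged with a disjoint summand and contributes only the basepoint.

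On a fixed summand $G_X^{2k+1}(\triangle^e(\underline{j}_0))$, the restricted $G$-action is precisely the endomorphism $X_\omega \circ w_{2k+1}'$ appearing in \eqref{t1}, whose fixed subspace is by definition $(G_X^{2k+1} \circ \triangle^e)^G(\underline{j}_0)$. Summing over fixed chains therefore identifies the $n$-simplices of the fixed subspace with
\[
\bigvee_{\underline{j}_0 \to \cdots \to \underline{j}_n} (G_X^{2k+1} \circ \triangle^e)^G(\underline{j}_0),
\]
that is, the $n$-simplices of $\hocolim_{I^G \times I^k \times I^G}(G_X^{2k+1} \circ \triangle^e)^G$. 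Since $\triangle^e$ is a functor, these degreewise identifications are compatible with faces and degeneracies, and they are implemented by the canonical inclusion map from the statement; realizing then yields the asserted homeomorphism. The only real content of the argument is this bookkeeping, which is unambiguous once $\triangle^e$, $w_{2k+1}'$, and $X_\omega$ have been unwound, so no genuine obstacle is expected.
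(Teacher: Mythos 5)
Your proof is correct and takes essentially the same route as the paper: both reduce to the simplicial levels of the bar construction via May's result that realization commutes with finite-group fixed points, identify the $G$-fixed wedge summands with chains in $I^G\times I^k\times I^G$ through the categorical isomorphism $\triangle^e$ onto the $w_{2k+1}$-fixed subcategory of $I^{2k+2}$, and observe that the residual action on a fixed summand is the natural transformation $X_\omega\circ w_{2k+1}'$. Your write-up merely makes the bookkeeping (non-fixed summands contributing only the basepoint, compatibility with faces and degeneracies) slightly more explicit than the paper's prose.
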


The $G$-action on $G_{X}^{2k+1} \circ \triangle^e$ at $(i,n_1, \dots, n_k, j )\in \ob\big( I^{G} \times I^k \times I^G\big)$ can be described as follows. The image of the functor is the mapping space:
\begin{align*}
G_{X}^{2k+1} \circ \triangle^e(i,n_1, \dots, n_k, j )= \Map\big(\overline{S}, \overline{A}\wedge X \big) ,
\end{align*}
where 
\begin{align*}
\overline{S} &= S^{i}\wedge S^{n_1} \wedge \cdots \wedge S^{n_k}  \wedge S^j \wedge S^{n_k} \wedge \cdots \wedge S^{n_1}, \\
\overline{A}&= A_{i}\wedge A_{n_1} \wedge \cdots \wedge A_{n_k}  \wedge A_j \wedge A_{n_k} \wedge \cdots \wedge A_{n_1} .
\end{align*}
The spaces above are $G$-spaces: The non-trivial element $\omega \in G$ fixes the first smash factor and reverses the order of the remaining factors, then acts by the permutation $w_{i} \in \Sigma_{i}$ factor-wise. On the space $\overline{A}$, $\omega$ further acts by the anti-involution $D_i$ factor-wise. The space $\overline{A} \wedge X$ is given the diagonal action and finally $G$ acts on the mapping space by the conjugation action.

We will need a $G$-equivariant version of Bökstedt's Approximation Lemma as proven by Dotto; see \cite[4.3.2]{Do}. We call a map of $G$-spaces $f: Z \to Y$ $n$-connected if $f^K: Z^K \to Y^K$ is $n$-connected for $K\in \{e,G\}$.

\begin{prop}[Equivariant Approximation Lemma] \label{EAL} 
 Let $(R,\alpha)$ be a ring with an anti-involution $\alpha$ and let $V$ be a finite dimensional real $G$-representation. Given $n\geq 0$, there exists $ N\geq 0$ such that the $G$-equivariant inclusion 
\[
G(R)^{2k+1}_{S^V} \circ \triangle^e (i) \hookrightarrow \hocolim_{I^{2k+2}}  G(R)^{2k+1}_{S^V} 
\]
is $n$-connected for all $i \in \ob( I^{G} \times I^k \times I^G) $ coordinate-wise bigger than $N$.
\end{prop}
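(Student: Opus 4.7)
The plan is to verify the $n$-connectivity on underlying spaces and on $G$-fixed points separately. For the underlying statement, the map is simply the inclusion of $G(R)^{2k+1}_{S^V}$ evaluated at the single object $\triangle^e(i)$ into its hocolim over $I^{2k+2}$, and Bökstedt's classical approximation lemma (\cite{Bo}; see also \cite[Sect. 4.2.2]{DGM}) applies directly: once each coordinate of $i$ is large enough in terms of $n$ and $|V|$, the inclusion is $n$-connected.

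For the $G$-fixed part I would first invoke Lemma \ref{fixed points} to rewrite the target as $\hocolim_{I^G \times I^k \times I^G}(G^{2k+1}_{S^V} \circ \triangle^e)^G$, so that the induced map on $G$-fixed points becomes the canonical inclusion of $(G^{2k+1}_{S^V} \circ \triangle^e)^G(i)$ into its own hocolim over $I^G \times I^k \times I^G$. I would then follow the strategy of Bökstedt's original proof, filtering the hocolim by the skeleta of its bar-construction simplicial space and extending inductively across simplicial degrees via a standard CW obstruction argument. The obstructions at each stage are controlled by the connectivity of the structure maps $F(i) \to F(i')$ induced by morphisms $i \to i'$ of $I^G \times I^k \times I^G$, where $F=(G^{2k+1}_{S^V} \circ \triangle^e)^G$.

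To bound these connectivities, note that every morphism in an $I^G$-factor is an $\omega$-equivariant injection, so the complementary elements pair up under the reversal action (with at most one fixed point in the odd-parity case), and consequently the added smash factor $S^j$ inherits a canonical $G$-representation structure which is a sum of regular and trivial summands. A morphism in one of the central $I^k$-factors contributes symmetrically at paired positions $\ell$ and $2k+2-\ell$ (via $\theta_\ell$ and its conjugate $\theta_\ell^\omega$), effectively smashing with a representation sphere for the regular representation. Combined with the $G$-equivariant connectivity of the unit and multiplication maps of $(HR,\alpha)$ viewed as a very special symmetric ring spectrum with anti-involution in the sense of \cite[Sect.~4.3]{Do}, these observations show that each structure map is $G$-equivariantly $n$-connected whenever the coordinates of $i$ exceed a threshold $N$ depending only on $n$, $k$ and $|V|$.

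The principal technical obstacle is the bookkeeping in the equivariant obstruction argument: at each simplicial level one must simultaneously control $n$-connectivity on the underlying spaces and on the $G$-fixed points, and the uniformity of the bounds must survive the mixed appearance of the conjugate morphisms $\theta^\omega$ in the image of $\triangle^e$. Once this is organized, a standard induction on simplicial degree in the bar resolution yields the claimed $n$-connectivity and completes the proof.
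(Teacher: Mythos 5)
The paper does not prove Proposition~\ref{EAL}; it simply cites Dotto's thesis \cite[4.3.2]{Do} (and elsewhere refers to the more general \cite[Prop.~2.7]{Hoe}), so there is no in-paper proof to compare against. Your sketch does reconstruct the shape of the cited argument correctly: since the paper defines $n$-connectivity of a $G$-map by requiring $n$-connectivity of $f^e$ and $f^G$, splitting into the underlying and $G$-fixed cases is legitimate; the underlying case is Bökstedt's classical lemma; and Lemma~\ref{fixed points} reduces the fixed-point case to an approximation statement for the functor $(G^{2k+1}_{S^V}\circ\triangle^e)^G$ on $I^G\times I^k\times I^G$. Your combinatorial observations are also on target --- an $\omega$-equivariant injection in $I^G$ has complement closed under reversal with at most one fixed element, so the added smash factor is a representation sphere built from copies of the regular representation plus at most one trivial line, and a morphism in a central $I$-factor appears doubled with its conjugate, contributing a regular-representation sphere. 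This is precisely what feeds the $\lceil m/2\rceil$-type estimates in Assumptions~\ref{concond}.

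The genuine gap is the step you describe as a ``standard CW obstruction argument.'' Bökstedt's approximation lemma for $I$ is not a generic fact about homotopy colimits over arbitrary index categories; it relies on the specific combinatorics of $I$ (cofinality of the standard inclusions and the ability to dominate any finite diagram by a single large object through highly connected structure maps, driving a skeleton-by-skeleton extension). Having reduced to a hocolim over $I^G\times I^k\times I^G$, you still need to establish the analogous statement for $I^G$, which is a different category with different morphism sets, and run the induction there; that is where the real technical content of the cited result lies, and it cannot simply be treated as standard. A smaller loose end is that you assert but do not verify the connectivity hypotheses of Assumptions~\ref{concond} for $(HR,\alpha)$ --- in particular that $HR_{2n}^{D_{2n}\circ\omega_{2n}}$ is $(n-1)$-connected, which the paper attributes to \cite[Lemma~6.3.2]{Do} --- since this is exactly what controls the $G$-fixed connectivity of the structure maps your argument depends on.
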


\section{The real topological Hochschild homology spectrum}
The space $\THR(A,D;S^0)$ is the $0$th space of a fibrant orthogonal  $O(2)$-spectrum in the model structure based on the family of finite subgroups of $O(2)$, see Proposition \ref{fibrant}. Furthermore, if $A$ is a commutative symmetric ring spectrum, then $\THR(A,D)$ has the homotopy type of an $O(2)$-ring spectrum.  

In the classical setup one needs certain connectivity assumptions on the spectrum $A$ to ensure that $\THH(A)$ has the correct homotopy type. We likewise need some connectivity assumption on $(A,D)$. For an integer $n$ we let $\lceil \frac{n}{2} \rceil $ denote the ceiling of $\frac{n}{2}$. Throughout this section we make the following assumptions on $(A,D)$:

\begin{ass}\label{concond} 
Let $(A,D)$ be a symmetric ring spectrum with anti-involution. We assume that $A_n$ is $(n-1)$-connected  as a non-equivariant space and that $\left(A_n\right)^{D_n \circ \omega_n}$ is $\left(\lceil \frac{n}{2} \rceil -1 \right)$-connected. Furthermore we assume that there exists a constant $\epsilon \geq 0$ such that the structure map $\lambda_{n,m}: A_n \wedge S^m \to A_{n+m}$ is $(2n+m- \epsilon)$-connected as a map of non-equivariant spaces and such that the restriction of the structure map $\lambda_{n,m}: A_n^{D_n \circ \omega_n} \wedge \left(S^m\right)^{\omega_m} \to \left(A_{n+m}\right)^{D_{n+m}\circ (\omega_n \times \omega_m)}$ is $(n+\lceil \frac{m}{2} \rceil -\epsilon)$-connected.
\end{ass}

By an $O(2)$-representation, we will mean a finite dimensional real inner product space on which $O(2)$ acts by linear isometries. We fix a complete $O(2)$-universe $\mathcal{U}$ and work in the category of orthogonal $O(2)$-spectra indexed on $\mathcal{U}$ as defined in \mbox{\cite[Chapter II.4]{MM}.} Let $V \subset \mathcal{U}$ be a finite $O(2)$-representation. Let
\[\THR(A,D)(V)=\triangle^* \THR(A,D; S^V), \]
where $\triangle: O(2) \to O(2) \times O(2)$ is the diagonal map. The orthogonal group $O(V)$ acts on $\THR(A,D)(V)$ through the sphere $S^V$. It is straightforward to construct spectral structure maps
\[\sigma_{V,W}:\THR(A,D)(V)  \wedge S^W \to \THR(A,D)(V\oplus W),\]
see \cite{GH} or \cite{Hoe}. The family of $O(V) \rtimes O(2)$-spaces $\THR(A,D)(V)$ together with the maps $\sigma_{V,W}$ defines an orthogonal $O(2)$-spectrum indexed on $\mathcal{U}$, which is denoted $\THR(A,D)$. The following result is proven in \cite[Prop. 3.6]{Hoe}.

\begin{prop} \label{fibrant}
If $V$ and $W$ are finite $O(2)$-representations, then the adjoint of the structure map
\[
\widetilde{\sigma}_{V,W}: \THR(A,D)(V) \to \Map(S^{W} , \THR(A,D)(V\oplus W)) 
\]
induces a weak equivalence on $H$-fixed points for any finite subgroup $H \leq O(2)$. 
\end{prop}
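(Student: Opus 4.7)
The plan is to verify the $\Omega$-spectrum condition on $H$-fixed points for each finite $H \leq O(2)$ separately. Up to conjugacy these are the cyclic groups $C_r$ and dihedral groups $D_r$, so the problem reduces to treating $H = e$, $H = C_r$, $H = G$, and $H = D_r$. The uniform strategy is to rewrite the fixed points of the realization of the dihedral space $\THR(A,D;S^V)[-]$ as the realization of a simplicial space whose simplices are homotopy colimits over appropriate subcategories of $I^{\bullet}$, then apply an approximation lemma to replace these homotopy colimits by the value of $G^{2k+1}_X$ at a single sufficiently large multi-index, and finally read off the desired connectivity of $\widetilde{\sigma}_{V,W}$ from the connectivity hypotheses of Assumption \ref{concond}.

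For the trivial subgroup the argument is the classical Bökstedt one: the adjoint structure map at simplicial level $k$ becomes
\[
\Map(S^{i_0}\wedge \cdots \wedge S^{i_k}, A_{i_0}\wedge \cdots \wedge A_{i_k}\wedge S^V) \to \Map(S^{i_0}\wedge \cdots \wedge S^{i_k}\wedge S^W, A_{i_0}\wedge \cdots \wedge A_{i_k}\wedge S^{V\oplus W})
\]
after Bökstedt's approximation lemma, and its connectivity tends to $\infty$ with $|V|$ thanks to the $(2n+m-\epsilon)$-connectivity of $\lambda_{n,m}$. For $H = C_r$ one precedes this step with $r$-fold edgewise subdivision $\sd_r$, exactly as in \cite{HM97}, and runs the same argument on the fixed points of $\hocolim_{I^{r(k+1)}} G^{rk+r-1}_{S^V}$.

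For $H = G$ I would invoke Remark \ref{sub} to replace $\THR(A,D)(V)$ by $|\sd^e \THR(A,D;S^V)[-]|$, whose $G$-action is simplicial. Since $G$ is finite, $|(-)|$ commutes with $G$-fixed points, so Lemma \ref{fixed points} reduces each simplicial level of the $G$-fixed points to
\[
\hocolim_{I^G \times I^k \times I^G} (G^{2k+1}_{S^V} \circ \triangle^e)^G.
\]
Proposition \ref{EAL} then approximates this homotopy colimit by $(G^{2k+1}_{S^V} \circ \triangle^e(i))^G$ at a sufficiently large multi-index $i = (i_0, n_1, \dots, n_k, i_{k+1})$, with $i_0, i_{k+1}$ taken in $I^G$. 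The adjoint of the structure map on this value is, after identifying fixed points of the mapping space with the mapping space on fixed points, a map between $\Map$-spaces whose source and target are built from $A_n^{D_n \circ \omega_n}$-factors (for the $I^G$-indices) and ordinary $A_n$-factors, smashed with the fixed spheres $(S^V)^G$ and $(S^{V\oplus W})^G$. Its connectivity is then controlled precisely by the fixed-point part of Assumption \ref{concond} (the $(n+\lceil m/2\rceil - \epsilon)$-connectivity of the restricted $\lambda_{n,m}$) together with the unrestricted part for the untwisted factors, and these estimates grow without bound as the multi-index and $V$ do.

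Finally, for $H = D_r$ one first applies the $r$-fold edgewise subdivision to pass to a simplicial space with a simplicial $D_r = C_r \rtimes G$-action (the $C_r$ acts via the cyclic operators, and $G$ acts by the $w$-operators on the subdivided object), and then applies real subdivision $\sd^e$ to this. Lemma \ref{fixed points} and Proposition \ref{EAL} have direct analogues in which $I^G$ is replaced by the appropriate subcategory of $I$ cut out by the combined $C_r$- and $G$-fixedness conditions on tuples, and one concludes exactly as in the $G$-case from the connectivity hypotheses on $(A_n)^{D_n \circ \omega_n}$ and $\lambda_{n,m}$. The main technical obstacle is bookkeeping in this last step: verifying that after the combined subdivisions the relevant multi-indices are fixed by precisely the prescribed combination of cyclic rotations and order reversals, and that the resulting approximation lemma still gives connectivity growing in $|V|$; everything else is formal from the material already recalled in Sections 1 and 2.
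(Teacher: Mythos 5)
The paper itself does not prove Proposition~\ref{fibrant}; it defers entirely to \cite[Prop.~3.6]{Hoe}, so there is no in-paper proof against which to check your attempt line by line. That said, your outline is the standard strategy and is, to my knowledge, the one taken in the cited thesis: classify the finite subgroups of $O(2)$ up to conjugacy as $C_r$ and $D_r$, simplicialize the relevant group action via (edgewise and/or real) subdivision, identify fixed points of the realization with the realization of levelwise fixed points, and then use an equivariant Bökstedt-type approximation lemma to reduce the adjoint structure map to a map of mapping spaces whose connectivity is read off Assumption~\ref{concond}.

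Two points where your sketch is thinner than it should be. First, Proposition~\ref{EAL} as stated in this paper applies only to Eilenberg--MacLane spectra $(HR,\alpha)$ and only to $G$-fixed points; what you actually need here, for a general $(A,D)$ satisfying Assumption~\ref{concond} and for all finite $H\leq O(2)$, is the general approximation lemma that the paper attributes to \cite[Prop.~2.7]{Hoe}. Citing Proposition~\ref{EAL} directly is a category error, although the needed statement exists elsewhere and is exactly what Assumption~\ref{concond} is calibrated for. Second, and more substantively, the $D_r$ case is not merely bookkeeping on top of the $C_r$ and $G$ cases. Composing $\sd_r$ with $\sd^e$ does give the right simplicial degree count, but whether $\sd_r$ of a dihedral object is again dihedral (so that $\sd^e$ can even be applied), and how the $w$-operators interact with the cyclic ones on the subdivided object, depends on the parity of $r$ and must be worked out; the indices fixed under the resulting $D_r$-action and the resulting analogue of Lemma~\ref{fixed points} are genuinely new statements, not formal consequences of the two cases you already treated. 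Your honesty in flagging this is good, but as written the $D_r$ step is an assertion, not an argument. With those caveats, the approach is sound and is what the cited reference does.
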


When $A$ is a commutative symmetric ring spectrum, $\THH(A)$ is a $\mathbb{T}$-ring spectrum, though we must change foundations and work in the category of symmetric orthogonal $\mathbb{T}$-spectra to display this structure; see \cite{Ho01}. The multiplicative and unital structure maps as described in \cite[Appendix]{GH} are compatible with the added $G$-action. Thus when $A$ is commutative, $\THR(A,D)$ is a symmetric orthogonal $O(2)$-ring spectrum. We briefly recall the construction and refer to \cite[Appendix]{GH} for details. Let $(n)$ denote the finite ordered set $\{1,\dots,n\}$ and let $I^{(n)}$ denote the product category. There is a functor
\[
\sqcup_n :I^{(n)} \to I
\]
given by addition of objects and concatenation of morphisms according to the order of $(n)$. Let $G_X^{k,(n)}$ denote the composite $G_X^k \circ (\sqcup_n )^{k+1}$. There is a dihedral space $\THR^{(n)}(A,D;X)[-]$ with $k$-simplices the homotopy colimit
\[
\THR^{(n)}(A,D;X)[k]:= \hocolim_{\big(I^{(n)}\big)^{k+1}} G_X^{k,(n)},
\]
and with cyclic structure maps constructed as for $\THR(A,D;X)[-]$ with minor adjustments.
We define 
\[w_k^{(n)}:\big( I^{(n)}\big)^{k+1} \to \big(I^{(n)}\big)^{k+1}\] 
on objects by
\[
w_k^{(n)}\big( (i_{01}, \dots , i_{0n}), \dots , (i_{k1}, \dots , i_{kn}) \big)=\big( (i_{01}, \dots , i_{0n}),(i_{k1}, \dots , i_{kn}), \dots , (i_{11}, \dots , i_{1n})  \big)
\]  
and on morphisms by
\[
w_k^{(n)}\big( (\alpha_{01}, \dots , \alpha_{0n}), \dots , (\alpha_{k1}, \dots , \alpha_{kn}) \big)= \big( (\alpha^{\omega}_{01}, \dots , \alpha^{\omega}_{0n}), \dots , (\alpha^{\omega}_{11}, \dots , \alpha^{\omega}_{1n})  \big)
\]  
Furthermore we define the natural transformation 
\[
\big(w_k^{(n)}\big)': G_X^{k,(n)} \Rightarrow G_X^{k,(n)} \circ w_k^{(n)}
\] 
at an object $\big( (i_{01}, \dots , i_{0n}), \dots , (i_{k1}, \dots , i_{kn}) \big) \in \big( I^{(n)}\big)^{k+1} $ by replacing the permutations $\omega_{i_{j1} + \cdots + i_{jn}}$ by the permutations $\omega_{i_{j1}} \times  \cdots \times \omega_{i_{jn}}$ in the defining diagram for the natural transformation $w_k'$. The dihedral structure map is the composition
\[
w^{(n)}_k :\hocolim_{\big( I^{(n)}\big)^{k+1} }  \ G_X^{k,(n)} \xrightarrow{\big(w^{(n)}_k\big)'} \hocolim_{\big( I^{(n)}\big)^{k+1} } \ G_X^{k,(n)}  \circ w^{(n)}_k \xrightarrow{\ind_{w^{(n)}_k}}  \hocolim_{\big( I^{(n)}\big)^{k+1} } \ G_X^{k,(n)}. 
\]
Let $\THR^{(n)}(A;D;X)$ denote the geometric realization of $\THR(A,D;X)[-]$.

An order preserving inclusion $\iota: (m) \hookrightarrow (n)$ induces a functor 
\[
\iota: \big(I^{(m)}\big)^{k+1} \to \big(I^{(n)}\big)^{k+1}
\]
by inserting the initial element $0\in \Ob(I)$ into the added coordinates, which in turn induces a map 
\[
\iota: \THR^{(m)}(A;D;X) \to \THR^{(n)}(A;D;X).
\]
When $m\geq 1 $ it follows from the most general version of the Equivariant Approximation Lemma as stated in \cite[Prop. 2.7]{Hoe} that the map $\iota$ induces isomorphisms on $\pi_*^H(-)$ for all finite subgroups $H\leq O(2)$. 
      
We define the symmetric orthogonal spectrum $\THR(A,D)$ as follows. Let $n$ be a non-negative integer, and let $V$ be a finite $O(2)$-representation. The $(n,V)$th space is defined to be
\[
\THR(A,D)(n,V)=\triangle^*\THR^{(n)}(A,D; S^n\wedge S^V)
\]      
where $\triangle: \Sigma_n \times O(2) \to \Sigma_n \times O(2) \times \Sigma_n \times O(2) $
            is the diagonal map. The action by the first $O(2)$-factor arises from the dihedral structure and the action by the second $O(2)$-factor is induced from the $O(2)$-action on $V$. The action by the first $\Sigma_n$-factor is induced from permutation action on $I^{(n)}$ and the action by the second $\Sigma_n$-factor is induced from the $\Sigma_n$-action on $S^n$ given by permuting the sphere coordinates. The spectrum structure maps and the unit maps are described in \cite[Appendix]{GH} and one can verify that they are $G$-equivariant. 
            
            To define multiplicative structure maps we first recall that the canonical map 
           \[
\hocolim_{ (I^{(n)})^{k+1}} G^{k, (n)}_X \wedge \hocolim_{ (I^{(m)})^{k+1}} G^{k, (m)}_Y  \xrightarrow{\cong} \hocolim_{ (I^{(n+m)})^{k+1}} G^{k, (n)}_X \wedge G^{k, (m)}_Y
           \]
           is a homeomorphism, when the spaces are given the compactly generated weak Hausdorff topology. Next we note that there are natural transformations             
\[
\mu_{n,X,m,Y}': G_X^{k,(n)} \wedge G_Y^{k,(m)} \Rightarrow G_{X \wedge Y}^{k,(n+m)}
\]    
given by smashing together the maps $f\in G_X^{k,(n)}$ and $g \in G_Y^{k,(m)}$ and composing with the multiplication maps in $A$. The composition of the canonical map and the map induced by the natural transformation $\mu'$ commutes with the dihedral structure maps, the $\Sigma_n\times \Sigma_m$-action, and the $O(2)$-action from $X$ and $Y$. Given $n,m\geq 0$ and finite $O(2)$-representations $V$ and $W$ the multiplicative structure map is the geometric realization 
\[
\mu_{n,V,m,W}: \THR(A,D)(n,V) \wedge \THR(A,D)(m,W) \to \THR(A,D)(n+m,V\oplus W).
\]

   \section{The components of the $G$-geometric fixed points}
This section is devoted to the proof of Theorem \ref{A} in the introduction. We start by introducing some notation. We define the $G$-spheres $S^{1,0}=S^{\R}$ and $S^{1,1}=S^{i\R}$ to be the pointed $G$-spaces given by the one point compactifications of the $1$-dimensional trivial representation and sign representation, respectively. More generally, we set
\[
S^{p,q}=\big( S^{1,0}\big)^{\wedge (p-q)} \wedge \big( S^{1,1}\big)^{\wedge (q)} 
\]
for integers $p\geq q \geq 0$.

Let $EG$ be the free contractible $G$-CW-complex
\[
EG:=\bigcup_{n=0}^{\infty}S\big(\bigoplus_{j=1}^n i\mathbb{R}\big),
\] 
where $S(\oplus_{j=1}^n i\mathbb{R})$ denotes the unit sphere in $\oplus_{j=1}^n i\mathbb{R}$. We denote by $\widetilde{E}G$ the reduced mapping cone of the based $G$-map $EG_+ \to S^0$ which collapses $EG$ to the non-basepoint, hence 
\[
\widetilde{E}G=\colim_{k \to \infty} S^{k,k}.
\] 
If $X$ is an orthogonal $G$-spectrum, then the derived $G$-fixed points of $\widetilde{E}G \wedge X$ is a model for the derived $G$-geometric fixed point of $X$; see \cite[Prop. 4.17]{MM}. Consider the inclusion $S^{n,n} \to S^{n+1,n+1}$. There are canonical homeomorphisms of $G$-spaces
\begin{align*}
S^{n+1,n+1}/S^{n,n} \xrightarrow{\cong }\Sigma S^{n,n} \wedge G_+  \xrightarrow{\cong} S^{n+1} \wedge G_+,
\end{align*} 
where the first map is described in \cite[Lemma A.4]{Hoe} and the second map untwists the $G$-action, that is the map is given by $(x,g) \mapsto (g^{-1}x,g).$
Thus there are cofiber sequences of based $G$-CW-complexes for $n\geq 0$: 
\[
S^{n,n} \to S^{n+1,n+1} \to G_+ \wedge S^{n+1}.
\]
We smash the cofiber sequence with the orthogonal $G$-spectrum $X$ and obtain a long exact sequence of $G$-stable homotopy groups, which contains the segment
\begin{align*}
\cdots \to \pi_{-n}(X) \xrightarrow{} \pi_0^{G}(S^{n,n} \wedge X) \to \pi_0^{G}(S^{n+1,n+1} \wedge X)  
\to \pi_{-n-1}(X) \to \cdots
\end{align*}
If $X$ is connective, then the inclusion $S^{n,n} \to S^{n+1,n+1}$ induces an isomorphism 
\[
\pi_0^{G}(S^{n,n} \wedge X)
\xrightarrow{\cong} \pi_0^{G}(S^{n+1,n+1}\wedge X )
\]
for $n\geq 1$ and, in particular, the inclusion $S^{1,1} \to \widetilde{E}G$ induces an isomorphism 
\[
\pi_0^{G}(S^{1,1} \wedge X) \xrightarrow{\cong} \pi_0^G(\widetilde{E}G\wedge X).
\]
We are now ready to prove Theorem \ref{A} from the introduction. 

\begin{proof}[Proof of Theorem \ref{A}]
Let $(R, \alpha)$ denote a ring $R$ with an anti-involution $\alpha$. By the discussion above, the components of the derived $G$-geometric fixed points can be calculated as the homotopy group $\pi_0^{G}(S^{1,1} \wedge \THR(R, \alpha)) $. The actions of smashing an orthogonal $G$-spectrum with $S^{1,1}$ and shifting the spectrum by $i\mathbb{R}$ yield canonically $\pi_*$-isomorphic  $G$-spectra. It follows from Lemma \ref{fibrant} that we have an isomorphism of abelian groups:
\begin{align*}
\pi_0^{G}(S^{1,1} \wedge \THR(R, \alpha))  \cong \pi_0 \big( \THR(R, \alpha)(i\mathbb{R})^G \big).
\end{align*}
Segal's real subdivision described in Remark \ref{sub} provides a homeomorphism 
\[
D^e:  \vert \big( \sd^e \THR(R, \alpha; S^{1,1})[-] \big)^G \vert \xrightarrow{\cong} \THR(R, \alpha)(i\mathbb{R})^G.
\]
By \cite[Lemma 11.11]{May72}, we can calculate the group of components of the left hand side as the quotient of $\pi_0\big(\sd^e \THR(R, \alpha; S^{1,1})[0]^G\big)$ by the equivalence relation generated by $d^e_0(x) \sim d^e_1 (x)$ for all $x\in \pi_0\big( \sd^e \THR(A; S^{1,1})[1]^G\big)$. It follows from Lemma \ref{fixed points} that the diagram
 \[\begin{tikzpicture}
\setlength{\TMP}{3pt}
\node(a){$\sd^e \THR(R, \alpha; S^{1,1})[1]^G$};
\node(b)[right of=a, node distance = 5.2cm]{$\sd^e \THR(R, \alpha; S^{1,1})[0]^G$};
\draw[->] ([yshift=\TMP]a.east) to node [above]{$d_0^e$} ([yshift=\TMP]b.west);
\draw[->] ([yshift=-\TMP]a.east) to node [below]{$d_1^e$} ([yshift=-\TMP]b.west);

\end{tikzpicture}
\]
is homeomorphic to the left hand part of the homotopy commutative diagram
 \[
 \begin{tikzpicture}
 \setlength{\TMP}{3pt}
\node(a){$\displaystyle \hocolim_{I^{G} \times I \times I^{G}}  (G^{3}_{X}\circ \triangle^e)^{G}$};
\node(b)[below of=a, node distance = 2cm]{$\displaystyle  \hocolim_{I^{G}\times I^{G}}  (G^{1}_{X}\circ \triangle^e)^{G}$};
\node(c)[right of=a, node distance = 7.3cm]{$\Map(S^i \wedge S^n \wedge S^j \wedge S^n, HR_i \wedge HR_n \wedge HR_j \wedge HR_n \wedge S^{1,1} )^G$};
\node(d)[below of=c, node distance = 2cm]{$\Map(S^{n+i+n} \wedge S^{n+j+n}, HR_{n+i+n} \wedge HR_{n+j+n} \wedge S^{1,1} )^G.$};

\draw[->] ([xshift=\TMP]a.south) to node [right]{$d_0 \circ d_3$} ([xshift=\TMP]b.north);
\draw[->] ([xshift=-\TMP]a.south) to node [left]{$d_1 \circ d_2$} ([xshift=-\TMP]b.north); 

\draw[->] ([xshift=\TMP]c.south) to node [right]{$ \incl_j\circ d'_0 \circ d'_3$} ([xshift=\TMP]d.north);
\draw[->] ([xshift=-\TMP]c.south) to node [left]{$\incl_i \circ  d'_1 \circ d'_2$} ([xshift=-\TMP]d.north); 

\draw[->] (c) to node [left]{} (a);
\draw[->] (d) to node [left]{} (b);

\end{tikzpicture}
\]
Here $\incl_j$ is the image of the functor $G^1_{S^{1,1}} \circ \triangle^e$ at the morphism $(\id_{n+i+n}, \incl_{\text{mid}})$ in $I^G \times I^G$ where $\incl_{\text{mid}}$ is middle inclusion $j \to n+j+n$. The map $\incl_j$ is defined analogously. The horizontal maps can be made as connected as desired by choosing $i$, $j$ and $n$ big enough by the Equivariant Approximation Lemma \ref{EAL}. We fix a choice of $i$, $j$ and $n$ such that the horizontal maps are $0$-connected and for simplicity we choose $i$ and $j$ to be even.

If $X$ and $Y$ are $G$-CW-complexes, then the inclusion of fixed points $g: X^G \hookrightarrow X$ induces a fibration
\[g_*: \Map_G(X,Y)  \to \Map_G(X^G,Y)=\Map(X^G, Y^G)\]
with fiber $\Map_G(X/X^G,Y)$. It follows from \cite[Prop. 2.7]{Ad} that the connectivity of the fiber can be estimated as follows:
\[
\conn(\Map_G(X/X^G,Y)) \geq \underset{K \in \{e, G\}}{\min} (\conn(Y^K)-\dim((X/X^G)^K)).
\]
In the case at hand, the inclusions of $G$-fixed points
\[
\big(S^i \wedge S^n \wedge S^j \wedge S^n \big)^G \hookrightarrow S^i \wedge S^n \wedge S^j \wedge S^n
\] 
and 
\[
\big(S^{n+i+n} \wedge S^{n+j+n} \big)^G \hookrightarrow S^{n+i+n} \wedge S^{n+j+n}
\] 
induce fibrations with $0$-connected fibers, so the right hand part of the diagram evaluated at $\pi_0$ is isomorphic to the diagram
 \[
 \begin{tikzpicture}
  \setlength{\TMP}{3pt}
\node(a){$\pi_0 \Big( \Map(S^{\frac{i}{2}} \wedge S^n \wedge S^{\frac{j}{2}}, HR_i^{H\alpha \circ  \omega} \wedge HR_n \wedge HR_j^{H\alpha \circ \omega} ) \Big)$};
\node(b)[below of=a, node distance = 1.8cm]{$\pi_0 \Big( \Map(S^{n+\frac{i}{2}+n} \wedge S^{n+\frac{j}{2}+n}, HR_{n+i+n}^{H\alpha \circ \omega} \wedge HR_{n+j+n}^{H\alpha \circ \omega}) \Big),$};

\draw[->] ([xshift=\TMP]a.south) to node [right]{$\tilde{d}_0$} ([xshift=\TMP]b.north);
\draw[->] ([xshift=-\TMP]a.south) to node [left]{$\tilde{d}_1$} ([xshift=-\TMP]b.north); 

\end{tikzpicture}
\]
where $\tilde{d}_0:= \pi_0^*(\incl_j\circ d'_0 \circ d'_3)$ and $\tilde{d}_1:= \pi_0^*(\incl_i \circ  d'_1 \circ d'_2)$. We have omitted the index on $H\alpha$ and $\omega$. The space $HR_n$ is $(n-1)$-connected and Dotto proves that the space $HR_{2n}^{\alpha \circ \omega}$ is $(n-1)$-connected; see \cite[Lemma 6.3.2]{Do}. It follows from the Hurewicz isomorphism and the Künneth formula that the diagram above is isomorphic to the diagram
 \[
 \begin{tikzpicture}\label{diagram}
  \setlength{\TMP}{3pt}
\node(c){$\pi_{\frac{i}{2}}(HR_i^{H\alpha \circ \omega}) \otimes \pi_{n}(HR_n) \otimes  \pi_{\frac{j}{2}}(HR_j^{H\alpha \circ \omega})$};
\node(d)[below of=c, node distance = 1.8cm]{$\pi_{n+\frac{i}{2}}(HR_{n+i+n}^{H\alpha \circ \omega}) \otimes \pi_{n+\frac{j}{2}}(HR_{n+j+n}^{H\alpha \circ \omega}).$};

\draw[->] ([xshift=\TMP]c.south) to node [right]{$\tilde{d}_0$} ([xshift=\TMP]d.north);
\draw[->] ([xshift=-\TMP]c.south) to node [left]{$\tilde{d}_1$} ([xshift=-\TMP]d.north); 

\end{tikzpicture}
\]
The homotopy groups $\pi_n(HR_{2n}^{\alpha \omega})$ are independent of $n$ when $n\geq 1$ and we therefore calculate $\pi_1(HR_{2}^{\alpha \omega})$. The space $HR_2$ is the geometric realization of the simplicial set
\[
R[S^1[-]\wedge S^1[-]]/ R[*].
\] 
The action by $H\alpha \circ \omega$ is induced by a simplicial action where $\alpha$ acts on the $R$-label and $\omega$ acts by twisting the smash factors $S^1[-]\wedge S^1[-]$. Since taking fixed points of a finite group commutes with geometric realization, $HR_2^{H\alpha \circ \omega}$ is the geometric realization of the simplicial set
\[
\big( R[S^1[-]\wedge S^1[-]] \big)^{\alpha \circ \omega}/ R[*].
\] 
This is a simplicial abelian group and we may therefore calculate $\pi_1(HR_2^{H\alpha \circ \omega})$ as the first homology group of the associated chain complex. Recall that 
\[\Delta^1[k]=\text{Hom}_{\Delta}([k],[1])=\{x_0, x_1, \dots, x_{k+1}\}\]
where $ \# x_i^{-1}(0)=i$ and with the face maps given by
\[d_s(x_i)= \left\{ \begin{array}{rl}
 x_i &\mbox{ if $i\leq s $} \\
x_{i-1} &\mbox{ if $i>s.$}
       \end{array} \right. \]
The sphere $S^1[-]$ is defined to be the quotient $\Delta^1[-]/\partial \Delta^1[-]$. We have representatives of the simplices in $S^2[-]=S^1[-]\wedge S^1[-]$ as follows:
\begin{align*}
S^2[0]=\{x_0 \wedge x_0\}, \quad S^2[1]=\{x_0 \wedge x_0, \ x_1 \wedge x_1\}, \\
S^2[2]=\{x_0 \wedge x_0, \  x_1 \wedge x_1, \ x_2 \wedge x_2, \  x_1 \wedge x_2, \  x_2 \wedge x_1 \}.
\end{align*}
The associated chain complex of $\big( R[S^1[-]\wedge S^1[-]] \big)^{\alpha \circ \omega}/ R[*]$ begins with the sequence
\[
\cdots \to (R\cdot (x_1 \wedge x_1) \oplus R \cdot (x_1 \wedge x_2) \oplus R \cdot (x_2 \wedge x_1))^{\alpha \circ \omega} \xrightarrow{d} R^{\alpha} \cdot (x_1 \wedge x_1) \xrightarrow{} 0.
\]
Since $d(x_1 \wedge x_1)=0$, the first homology group is the cokernel of the map
\[
(R\cdot (x_1 \wedge x_2) \oplus R\cdot (x_2 \wedge x_1))^{\alpha \circ \omega} \xrightarrow{d} R^{\alpha}\cdot (x_1 \wedge x_1),
\]
where $d(x_1 \wedge x_2)=d(x_2 \wedge x_1)=-(x_1 \wedge x_1)$. The map 
\[
R \xrightarrow{} (R\cdot (x_1 \wedge x_2) \oplus R\cdot (x_2 \wedge x_1))^{\alpha \circ \omega} 
\]
which sends $r$ to the tuple $(-r,-\alpha(r))$ is an isomorphism, and $d$ corresponds to the norm map $N: R \to R^{\alpha}$ given by 
\[
N(r)=r +\alpha(r)
\] 
under this isomorphism. We conclude that $\pi_1(HR_2^{H\alpha \circ \omega})\cong R^{\alpha}/N(R)$. The diagram from before can now be identified with
 \[
 \begin{tikzpicture}
   \setlength{\TMP}{3pt}
\node(c){$R^{\alpha}/N(R)\otimes R \otimes  R^{\alpha}/N(R)$};
\node(d)[below of=c, node distance = 1.5cm]{$R^{\alpha}/N(R) \otimes R^{\alpha}/N(R)$};

\draw[->] ([xshift=\TMP]c.south) to node [right]{$\tilde{d}_0$} ([xshift=\TMP]d.north);
\draw[->] ([xshift=-\TMP]c.south) to node [left]{$\tilde{d}_1$} ([xshift=-\TMP]d.north); 

\end{tikzpicture}
\]
where $ \tilde{d}_0(r \otimes s \otimes t )=\alpha(s)rs \otimes t$ and $ \tilde{d}_1(r \otimes s \otimes t )=r \otimes st\alpha(s)$. It follows that
\[
\pi_0 \big( (\THR(R, \alpha)^c)^{gG} \big) \cong (R^{\alpha}/N(R)\otimes_{\Z} R^{\alpha}/N(R))/I,
\]
where $I$ denotes the subgroup generated by the elements $\alpha(s)rs \otimes t-r \otimes st\alpha(s)$ for all $s \in R$ and $r,t \in R^{\alpha}$. This completes the proof of Theorem \ref{A}.
\end{proof}

\begin{remark}
When $R$ is a commutative ring, then $\THR(R, \alpha)$ has the homotopy type of an $O(2)$-ring spectrum, hence the components of the $G$-geometric fixed points have a ring structure. We note that in this case $R^\alpha$ is a subring of $R$ and $N(R)$ is an ideal in $R^{\alpha}$. Furthermore the subgroup $I$ generated by the elements $\alpha(s)rs \otimes t-r \otimes st\alpha(s)$ for all $s \in R$ and $r,t \in R^{\alpha}$ is an ideal. It is generated as an ideal by the elements $ \alpha(r) \cdot r \otimes 1 - 1\otimes r\cdot \alpha(r) $ for all $r \in R$. These observations give 
\[
(R^{\alpha}/N(R)\otimes_{\Z} R^{\alpha}/N(R))/I
\]
 a natural ring structure.
\end{remark}

\begin{remark}\label{limit}
Let us consider the case where $R$ is a commutative ring with the identity serving as anti-involution. The functor
\[
R \mapsto \pi_0 \big(( \THR(R, \id)^c)^{gG} \big),
\]
considered as a functor from the category of commutative rings to the category of sets, is not representable. This rules out the possibility that the ring $\pi_0 \big(( \THR(R, \id)^c)^{gG} \big)$ is a ring of Witt vectors as defined by Borger in \cite{Borger}. For example, the functor does not preserve finite products. Indeed, in this case we have an isomorphisms of abelian groups
\[
\pi_0 \big(( \THR(R, \id)^c)^{gG} \big)\cong (R/2R \otimes R/2R)/I
\]
where $I$ is the ideal in the ring $R/2R \otimes R/2R$ generated as an ideal as follows
\[
I=\left( x^2 \otimes 1 - 1 \otimes x^2 \mid x \in R/2R \right).
\]
We consider the product $\mathbb{F}_2[x] \times \mathbb{F}_2[y]$. We have  a commutative diagram where the top right corner is the functor applied to the product ring and the lower right corner is the product of the functor applied to each factor. The horizontal maps are surjective quotient maps and the vertical maps are induced by the projections.
 \[
 \begin{tikzpicture}
\node(a){$\big (\mathbb{F}_2[x] \times \mathbb{F}_2[y]\big) \otimes \big(\mathbb{F}_2[x] \times \mathbb{F}_2[y]\big)$}; 
\node(b)[right of = a,node distance = 7cm]{$\Big( \big(\mathbb{F}_2[x] \times \mathbb{F}_2[y]\big) \otimes \big(\mathbb{F}_2[x] \times \mathbb{F}_2[y]\big)\Big) /I$};
\node(e)[below of = a,node distance = 1.5cm]{$ \big( \mathbb{F}_2[x] \otimes \mathbb{F}_2[x] \big) \times  \big( \mathbb{F}_2[y] \otimes \mathbb{F}_2[y]\big)$};
\node(f)[right of = e,node distance = 7cm]{$ \big(\mathbb{F}_2[x] \otimes \mathbb{F}_2[x]\big)/I \times \big( \mathbb{F}_2[y] \otimes \mathbb{F}_2[y]\big)/I$};
\draw[->>](a) to node [above]{} (b);
\draw[->>](e) to node [left]{} (f);
\draw[->](a) to node [right]{} (e);
\draw[->](b) to node [above]{} (f);
\end{tikzpicture}
 \]
 The claim is that the right vertical map is not a bijection. The left vertical map takes the element $(x,0)\otimes (0,y)$ to zero. The ideal 
 \[
 I \subset \big(\mathbb{F}_2[x] \times \mathbb{F}_2[y]\big) \otimes \big(\mathbb{F}_2[x] \times \mathbb{F}_2[y]\big)
 \] 
 is generated by the elements $\big(p(x)^2,q(y)^2 \big)\otimes 1 - 1 \otimes  \big(p(x)^2,q(y)^2\big)$, where $p(x)$ and $q(y)$ are polynomials. Hence $(x,0)\otimes (0,y) \notin I$ and the right vertical map is not injective. 
\end{remark}

In some cases, the calculation of the components of the $G$-geometric fixed points immediately leads to a calculation of the components of the $G$-fixed points as a ring. We have a cofibration sequence of $G$-spaces
\[
G_+ \to S^0 \to S^{1,1}.
\]
We smash the cofibration sequence with the spectrum $\THR(R, \alpha)$ and obtain a long exact sequence of $G$-stable homotopy groups which begins with the sequence 
\begin{align}\label{exact sequence}
\cdots \to \pi_0(\THR(R, \alpha)) \xrightarrow{V_{e}^{G}} \pi_0^{G}(\THR(R, \alpha)) \to \pi_0^G \big(S^{1,1} \wedge \THR(R, \alpha) \big) \to 0,
\end{align}
where $V_e^G$ denotes the transfer map; see \cite[Lemma 2.2]{HM97} for an identification of the induced map in the long exact sequence and the transfer map. We let 
\[
F_e^G: \pi_0^{G}(\THR(R, \alpha))  \to \pi_0(\THR(R, \alpha)) 
\]
denote the restriction map, which is a ring map. By the double coset formula 
\[F_e^G \circ V_{e}^{G} =N.\]

\begin{example}
If $R$ is a commutative ring with $\frac{1}{2} \in R$ and the identity serving as anti-involution, then it follows from the formula $F_e^G \circ V_{e}^{G} =2 \cdot \id$, that $V_{e}^{G}$ is injective. Since the components of the $G$-geometric fixed points vanish, it follows from the exact sequence \eqref{exact sequence} that 
 \[
F_e^G : \pi^{G}_0(\THR(R, \id)) \to  \pi_0(\THR(R, \id))=R\cdot 1
\] 
is a ring isomorphism.
\end{example}

\begin{remark} \label{witt}
Hesselholt and Madsen prove in \cite{HM97} that there is a canonical ring isomorphism identifying $ \pi_0\THH(R)^{C_{p^n}}$ with the $p$-typical Witt vectors of length $n+1$, when $R$ is a commutative ring and $p$ is a prime. Dress and Siebeneicher introduced a Witt vector construction in \cite{DS}, which is carried out relative to any given profinite group, and the $p$-typical Witt vectors of length $n+1$ are the Witt vectors constructed relative to $C_{p^n}$, i.e. $ \pi_0\THH(R)^{C_{p^n}} \cong W_{C_{p^n}}(R)$. If we let $D_{p^n}$ denote the dihedral group of order $2p^n$, then it is tempting to expect that the ring $\pi_0\THR(R, \id)^{D_{p^n}} $ can be identified with the Witt vectors $W_{D_{p^n}}(R)$ when $R$ is commutative but the example above shows that this is not the case. If $p>2$, then $W_G(\mathbb{F}_p)$ is isomorphic to $\mathbb{F}_p \times \mathbb{F}_p$, but $ \pi_0(\THR(\mathbb{F}_p, \id))^G$ is isomorphic to $\mathbb{F}_p$ by the example above.
\end{remark}

\begin{example}
We consider the example $(\Z,\id)$. Since $F_e^G \circ V_{e}^{G} =2\cdot \id$ and there is no $2$-torsion in $\Z$, the transfer map is injective and the long exact sequence \eqref{exact sequence} gives rise to a short exact sequence 
\[
0 \to \pi_0(\THR(\Z, \id)) \xrightarrow{V_e^G} \pi_0^G(\THR(\Z, \id)) \to \mathbb{Z}/2 \to 0,
\]
where $\pi_0(\THR(\Z))= \Z \cdot 1$. Since $\operatorname{Ext}_{\Z}^1(\Z/2\Z,\Z)=\Z/2\Z$, there are two possibilities for what this short exact sequence can look like, when considered as short exact sequence of abelian groups. The first possibility is 
\[
0 \to \Z \xrightarrow{V} \Z \times \Z/2\Z \to \mathbb{Z}/2\Z \to 0,
\]
with $V(x)=(x,0)$, hence $F(x,0)=2x$. Since $\mathbb{Z}$ is torsion free, $F(0,y)=0$, hence $F(x,y)=2x$. But then the pre-image of the unit is empty, which is a contradiction, since $F$ is a ring map. The short exact sequence must therefore be of the form
\[
0 \to \Z \xrightarrow{V} \Z \to \mathbb{Z}/2\Z \to 0,
\]
with $V=2 \cdot \id $ and $F=\id$. It follows that
\[F_e^G: \pi_0^G(\THR(\Z, \id)) \xrightarrow{} \pi_0(\THR(\Z, \id)) = \mathbb{Z} \cdot 1
\]
is a ring isomorphism.
\end{example}

\end{document}